\def\math#1{$#1$}
\def\mand#1{$$#1$$}
\def\frac#1#2{{#1\over #2}}
\def\mld#1{\begin{equation}
#1
\end{equation}}
\DeclareSymbolFont{AMSb}{U}{msb}{m}{n}
\DeclareMathSymbol{\N}{\mathbin}{AMSb}{"4E}
\DeclareMathSymbol{\Z}{\mathbin}{AMSb}{"5A}
\DeclareMathSymbol{\Q}{\mathbin}{AMSb}{"51}
\DeclareMathSymbol{\I}{\mathbin}{AMSb}{"49}
\DeclareMathSymbol{\C}{\mathbin}{AMSb}{"43}
\def\cl#1{{\cal #1}}
\def\exp#1{{\left\langle#1\right\rangle}}
\def\r#1{{(\ref{#1})}}
\def\dotfil{\leaders\hbox to 1.5mm{.}\hfill}
\newcounter{rmnum}
\def\RN#1{\setcounter{rmnum}{#1}\uppercase\expandafter{\romannumeral\value{rmnum}}}
\def\rn#1{\setcounter{rmnum}{#1}\expandafter{\romannumeral\value{rmnum}}}
\newtheorem{theorem}{Theorem}
\newtheorem{lemma}{Lemma}
\newcommand{\e}{{\mathbb{E}}}
\newcommand{\Var}{{\rm Var}}
\newcommand{\p}{\mathbb{P}}
\renewcommand{\phi}{\varphi}
\newcommand{\ignore}[1]{}
\title{Spreading Processes and Large Components in Ordered, Directed
Random Graphs}
\author{
Paul Horn\thanks{Department of Mathematics, Harvard University, {\tt phorn@math.harvard.edu}} \and Malik Magdon-Ismail
\thanks{Department of Computer Science, Rensselaer Polytechnic Institute, 
{\tt magdon@cs.rpi.edu}}}
\begin{document}
\maketitle

\begin{abstract}%
Order the vertices of a directed
random graph \math{v_1,\ldots,v_n};
edge \math{(v_i,v_j)} for \math{i<j} exists independently
 with probability \math{p}.
This random graph model is related to certain spreading processes on 
networks. We consider the component reachable from 
\math{v_1} and prove existence of a sharp threshold 
\math{p^*=\log n/n} at which this reachable component transitions from
\math{o(n)} to \math{\Omega(n)}.
\end{abstract}

\section{Introduction}

In this note we study a random graph model that captures the
dynamics of a particular type of spreading process.  
Consider a set of $n$ ordered vertices $\{v_1, \dots, v_n\}$ with vertex \math{v_1}
initially `infiltrated' (at time step 1). At time steps  
\math{2,3,\ldots,n}, vertex
$v_1$ attempts to independently 
infiltrate, with 
probability $p$, each of $v_2,v_3 \dots, v_n$ in turn (one per step). 
Either \math{v_i} gets infiltrated or immunized.  
If $v_i$ is infected, it attempts to infect $v_{i+1},
\dots, v_{n}$, also each with probability $p$;  $v_i$ does not 
attempt to infect $v_{1}, \dots, v_{i-1}$, however, as prior vertices 
are already 
either infiltrated or immunized. 
At time step \math{i}, all infiltrated
vertices \math{v_j} with \math{j<i} are attempting to
infiltrate \math{v_i}, and \math{v_i} gets infiltrated if any
 one of these attempts
succeeds.
Intuitively, \math{v_i} is more likely to get infiltrated if
more vertices are already infiltrated at the time that
\math{v_i} becomes 'succeptible'.  One example of such a contagion process
is given in \cite{malik171}.

This spreading  
process is equivalent to the following random model of an ordered,
directed graph \math{G}: order the vertices \math{v_1,\ldots,v_n}, and for 
\math{i<j}, the directed
edge \math{(v_i,v_j)} exists in \math{G}
 with probability
\math{p} (independently). Vertex \math{v_i} is infected if there is a (directed) path from 
\math{v_1} to \math{v_i}.
The question we address
is, ``What is the size of the set of vertices reachable from $v_1$?'' 
(the size of
the infection).  
We prove the following sharp result.

\begin{theorem} \label{t1}
Let $\mathcal{R}$ be the set of vertices reachable from $v_1$, and suppose $p = \frac{c \log n}{n} + \xi(n)$, where
$\xi(n) = o(\frac{\log n}{n})$ and $c > 0$ is fixed.  Then:
\begin{enumerate}
\item If $c < 1$, then $|\mathcal{R}| = n^{c+o(1)}$, a.a.s.
\item If $c = 1$, then $|\mathcal{R}| = o(n)$, a.a.s.
\item If $c > 1$, then $|\mathcal{R}| = \left(1-\frac{1}{c} + o(1)\right) n$, a.a.s.
\end{enumerate}
\end{theorem}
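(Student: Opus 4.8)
The plan is to collapse the whole reachability question onto a single one-dimensional Markov chain and then onto a sum of independent geometric random variables. Let $R_j$ denote the number of vertices among $v_1,\dots,v_j$ reachable from $v_1$, so $R_1=1$ and $|\mathcal R|=R_n$. The structural observation I would exploit is that the edges entering $v_j$ are independent of all edges among $v_1,\dots,v_{j-1}$; hence, conditioned on the past, $v_j$ is reachable iff at least one of the $R_{j-1}$ already-reachable predecessors sends an edge to it, which happens with probability exactly $q_{R_{j-1}}$, where $q_r:=1-(1-p)^r$ depends only on the current count $r$. Thus $(R_j)$ is a Markov chain whose increment is $\mathrm{Bernoulli}(q_{R_{j-1}})$, and, crucially, the number of steps $H_r$ spent at level $r$ before advancing to $r+1$ is $\mathrm{Geometric}(q_r)$, with the $H_r$ mutually independent (each step consumes fresh edge-coins). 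Writing $\tau_r:=\min\{j:R_j=r\}=1+\sum_{s=1}^{r-1}H_s$ for the first time $r$ vertices are reachable, one has the exact equivalence $\{R_n\ge r\}=\{\tau_r\le n\}$, so it suffices to locate the largest $r$ with $\tau_r\le n$.

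Since $\tau_r$ is a sum of independent geometrics, I would compute its mean and variance and apply Chebyshev's inequality. Here $\E[\tau_r]=1+\sum_{s=1}^{r-1}q_s^{-1}$ and $\Var(\tau_r)=\sum_{s=1}^{r-1}(1-q_s)q_s^{-2}$. Using $q_s\ge\tfrac12 sp$ when $sp\le 1$ and $q_s\to 1$ otherwise, the variance is dominated by the smallest levels and satisfies $\Var(\tau_r)=O(p^{-2})$ uniformly in $r$; intuitively this fluctuation is the ``ignition delay'' carried by $H_1\sim\mathrm{Geometric}(p)$. As $p^{-1}=\Theta(n/\log n)$, the standard deviation of $\tau_r$ is $O(n/\log n)=o(n)$, negligible against the gaps separating the three regimes. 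Everything therefore hinges on the mean. (As an independent check on the upper bounds in the $c\le 1$ cases, the crude first-moment estimate $\E[R_n]\le(1+p)^{n-1}\le e^{p(n-1)}=n^{c+o(1)}$ with Markov already gives $R_n\le n^{c+o(1)}$ a.a.s.)

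To estimate the mean I would write $q_s^{-1}=1+\big((1-p)^{-s}-1\big)^{-1}$, so that $\E[\tau_r]=r+\sum_{s=1}^{r-1}(e^{\beta s}-1)^{-1}$ with $\beta:=-\log(1-p)=(1+o(1))p$. The sum contributes $(1+o(1))\beta^{-1}\log(1/p)=(1+o(1))\,n/c$ once $r\gg 1/p$, while the leading $r$ comes from the saturated phase where $q_s\approx 1$. Solving $\E[\tau_r]=n$ then separates the cases. If $c<1$ the process never saturates ($rp\to 0$ throughout), so $\E[\tau_r]=(1+o(1))\beta^{-1}\log r$ and $\tau_r\le n$ forces $\log r\approx pn=(c+o(1))\log n$; Chebyshev with gap $\epsilon_n n$ (take $\epsilon_n=1/\sqrt{\log n}$) pins $R_n=n^{c+o(1)}$. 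If $c>1$ then $\E[\tau_r]=r+(1+o(1))n/c$, so $\tau_r=n$ at $r=(1-\tfrac1c+o(1))n$, and Chebyshev with gap $n/\sqrt{\log n}$ gives $R_n=(1-\tfrac1c+o(1))n$. If $c=1$, merely reaching level $1/p$ already costs time $(1+o(1))\beta^{-1}\log(1/p)=n-o(n)$, leaving only $o(n)$ further steps, whence $R_n\le 1/p+o(n)=o(n)$.

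The main technical work, and the only place the exact constants are decided, is the asymptotic evaluation of $\sum_{s=1}^{r-1}q_s^{-1}$ across the crossover between the exponential-growth regime ($sp\ll 1$, where $q_s^{-1}\approx 1/(sp)$ and the sum grows like $p^{-1}\log(\cdot)$) and the saturated regime ($sp\gg 1$, where $q_s^{-1}\approx 1$ and the sum grows linearly). Pinning down the coefficient $\log(1/p)/p=(1+o(1))n/c$ is precisely what produces the $1-\tfrac1c$ in case 3 and the threshold at $c=1$, so this estimate must be handled carefully, including verifying that the perturbation $\xi(n)$ affects only lower-order terms (since $\xi(n)\,n=o(\log n)$). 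The remaining point is to confirm that the variance is genuinely of smaller order than each gap, so that Chebyshev upgrades the mean computation to an a.a.s.\ statement; because the fluctuations concentrate in the first few large holding times and total only $O(p^{-2})$, this holds, and sharper sub-exponential tail bounds for sums of geometrics are available should stronger concentration ever be required.
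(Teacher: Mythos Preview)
Your approach is essentially the same as the paper's: both reduce the problem to the hitting times $\tau_r$ of the Markov chain $(R_j)$, observe that $\tau_r$ is a sum of independent geometric random variables with parameters $q_s=1-(1-p)^s$, and apply Chebyshev's inequality after computing the mean and variance. The paper calls your $\tau_{r}$ by the name $X_{r-1}$ (it indexes reachable vertices other than $v_1$).

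There are, however, a few technical differences worth noting. The paper computes $\E[X_t]$ by expanding each geometric series, reorganizing into $\sum_i d_t(i)(1-p)^i$ (where $d_t(i)$ counts divisors of $i$ at most $t$), and then applying a Dirichlet-type lemma together with summation by parts to obtain $\E[X_t]=t+\tfrac{\log t}{p}+O(1/p)$; your direct split of $\sum_s(e^{\beta s}-1)^{-1}$ at $s\approx 1/p$ is shorter and yields the equivalent $\beta^{-1}\log(1/\beta)+O(1/\beta)$ in the saturated regime. For the variance, the paper applies Cauchy--Schwarz to reach $\Var(X_t)\le\sqrt{(t/p)\,\E[X_t]}$, whereas your bound $\Var(\tau_r)=O(p^{-2})$ via the convergence of $\sum_s s^{-2}$ is more elementary and uniform in $r$. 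Finally, the paper proves all upper bounds separately via a first-moment path count (your inequality $\E[R_n]\le(1+p)^{n-1}$ is exactly that argument in compressed form), and its lower bound for $c>1$ is a two-stage argument---first locate $X_t$ for $t=n\log\log n/\log n$, then show almost all later vertices are reachable---while you obtain both sides of the $c>1$ and $c=1$ cases directly from the two-sided concentration of $\tau_r$. Your route is a bit more unified; the paper's path-counting gives a cleaner self-contained upper bound and avoids having to track the $\xi(n)$ perturbation inside the mean estimate quite as carefully.
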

Recall that 
an event holds a.a.s.(asymptotically almost surely), if it holds with probability $1-o(1)$; that is it holds with probability
tending to one as $n$ tends to infinity.  Note that we do not explicitly care whether $\xi(n)$ is positive or negative in the results above.  

Similar phase transitions are well known for various graph properties in other random graph models.  As shown by Erd\H{o}s and R\'enyi in \cite{er}, in the $G(n,M)$ model of
random graphs, where a graph is chosen independently from all graphs with $M$ edges, there is a similar emergence of a component of size $\Theta(n)$ around $M = \frac{n}{2}$ edges.    
Likewise, a threshold for connectivity was shown for $M = \frac{n \log n}{2}$ edges.  For the more familiar $G(n,p)$ model, where edges are present 
independenty with probability $p$, this translates into a threshold
at $p=\frac{1}{n}$ for a giant component, and at $p=\frac{\log n}{n}$ for
connectivity.  A much more comprehensive account of results on properties of random graphs can be found in \cite{B}.   {\L}uczak in \cite{L} and more recently
{\L}uczak and Seierstad in \cite{LS}, studied the emergence of the giant component in a random directed graphs, in both the directed model where $M$ random edges are present and in the model where edges are present with 
probability $p$.  Thresholds for strong connectivity were established for random directed graphs by Pal\'asti \cite{P} (for random directed graphs with $M$ edges) and Graham and Pike \cite{GP} (for random directed graphs with edge probability $p$).
We are not aware of any results for ordered directed random graphs where 
edges connect vertices of lower index to higher index.


\section{A Proof of Theorem \ref{t1}}

\begin{proof}[Upper bounds:]
For \math{i>1},
let $\mathcal{R}_i$ denote the event that $v_i$ is reachable, 
and let $X_i$ denote the number of paths to vertex $v_i$ in $G$. 
 If $\mathcal{P}_i$ denotes the set of all potential
paths from  \math{v_1} to $v_i$, 
then $X_i = \sum_{x \in \mathcal{P}_i} I(x)$ where $I(x)$ is a $\{0,1\}$ 
indicator random variable indicating whether the path \math{x}
exists in \math{G};
\math{I(x)=1} if and only if all edges in the path $x$ are present
in \math{G}.  
Then,
\begin{align*}
\p(\mathcal{R}_i) = \p(X_i \geq 1) \leq \e[X_i] 
&=\sum_{x \in \mathcal{P}_i} \e[I(x)]\\
&= \sum_{\ell=0}^{i-2} \sum_{\substack{x \in \mathcal{P}_i\\|x|=\ell+1}}   
\e[I(x)] \\
&= \sum_{\ell=0}^{i-2} {i-2 \choose \ell} p^{\ell+1} = p (1+p)^{i-2} \leq pe^{pi}.  
\end{align*}
Let $X$ denote the number of reachable vertices (other than
\math{v_1}).  
\begin{align*}
\e[X] = \sum_{i=2}^{n} \p(\mathcal{R}_{i}) \leq \sum_{i=1}^{n} pe^{pi} &= p\cdot \frac{e^{p(n+1)} - 1}{e^{p}-1}.
\end{align*}

For \math{p= \frac{c\log n}{n} + \xi(n)} with $c < 1$,
\begin{align*}
e^{p(n+1)}-1 &=\exp\left(c\log n + o(\log n)\right)-1 = n^{c + o(1)}, 
\end{align*} and 
\[ \frac{p}{(e^{p}-1)}=\left(\sum_{k=1}^{\infty} \frac{p^{k-1}}{k!}\right)^{-1}
=1+O(p).
\] Thus,
\mand{\e[X]\le  n^{c + o(1)}.} 
Applying Markov's inequality yields that $\p(X > \log(n)\e[X]) =o(1)$,
so \math{X\le \log(n)\e[X] = n^{c+o(1)}}, a.a.s.

Now consider $c > 1$.  Let 
\begin{align*}
\xi'(n) &:= \frac{3}{c} \max\left\{ \frac{n}{\log \log n}, \frac{n^2 \xi(n)}{c\log n} \right\} \\
t &:= \frac{n}{c} - \xi'(n)
\end{align*}
Note that by our choice of $\xi'(n)$, and the fact that $\xi(n) = 
o (\frac{\log n}{n})$, that $\xi'(n) = o(n)$.  
Then, 
\begin{align*}
\p(\mathcal{R}_t) \leq pe^{pt}&= p\exp\left(\left(c\frac{\log n}{n}  + \xi(n)\right)\left(\frac{n}{c} - \xi'(n) \right) \right) \\
&= p \exp\left(\log(n) + \frac{n\xi(n)}{c} - \frac{c (\log n) \xi'(n)}{n} - \xi(n)\xi'(n) \right)  \\
&\leq (1 + o(1))c \exp\left(\log \log(n) + \frac{n \xi(n)}{c} - \frac{c (\log n) \xi'(n)}{n} \right) = o(1) \\
\end{align*} 
Here, the last inequality comes from the fact that, by our choice of $\xi'(n)$,
\[
\frac{c (\log(n)) \xi'(n)}{n}  - \log \log(n) - \frac{n \xi(n)}{c} \geq \frac{1}{3}\xi'(n).
\]

Since \math{pe^{pi}} is increasing in \math{i},
the expected number of reachable vertices
$v_i$ with $i \leq t$ is at most $t\p(\mathcal{R}_t)=o(n)$.  
Applying  Markov's inequality,
$|\mathcal{R}  \cap \{v_1, \dots, v_t\}| = o(n)$ a.a.s.  
Thus, 
$$|\mathcal{R}| \leq n-t+|\mathcal{R}  \cap \{v_1, \dots, v_t\}|
=\textstyle\left(1 - \frac{1}{c} + o(1)\right)n \text{ a.a.s.}$$

For $p = \frac{\log n}{n} + \xi(n)$ with $\xi(n) = 
o\left(\frac{\log n}{n}\right)$, we will write  
$\xi(n) = \omega(n)\frac{\log n}{n}$,
where \math{\omega(n)\rightarrow0}.
Let $t = n\cdot\left(1 - \omega(n) - \frac{1}{\log \log n}\right).$
Then,
\begin{align*}
\p(\mathcal{R}_t) \leq pe^{-pt} 
&= 
\exp\left[(1 + \omega(n))\left({\textstyle 
1-\omega(n)-\frac{1}{\log\log n}}\right)\log n +
\log\left({\textstyle(1+\omega(n))\frac{\log n}{n}}\right)\right]
\\
&= \exp\left[-\omega(n)^2\log n - (1+\omega(n))\frac{\log n}{\log\log n} + 
\log\log n+\log(1+\omega(n))\right]\\
&= o(1),
\end{align*}
Thus the expected value of \math{|\mathcal{R}  \cap \{v_1, \dots, v_t\}|}
is \math{o(n)} and by Markov's inequality, this is also true a.a.s.
Now, since \math{n-t} is also \math{o(n)}, we have that 
\math{R=o(n)} a.a.s.
\end{proof}

To prove the lower bounds, we require a simple lemma similar to 
Dirichlet's theorem.  
Let $d(i)$ denote the number of divisors of $i$ and 
let $d_t(i)$ denote the number of 
divisors of $i$ that are at most~$t$.  
Dirichlet's Theorem states that
\[
\sum_{i=1}^{k} d(i) = k\log k + (2\gamma-1)k + O(\sqrt{k}),
\]
where $\gamma$ is Euler's constant.  For our purposes, 
we need a refinement of this result, summing 
\math{d_t(i)}.
\begin{lemma}\label{l1} 
\math{\displaystyle
\sum_{i=1}^{k} d_t(i) = k \log \min(t,k) + O(k).
}
\end{lemma}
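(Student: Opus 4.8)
The plan is to interchange the order of summation, counting the contributions by divisor rather than by $i$. Since $d_t(i) = \#\{a : a \mid i,\ a \le t\}$, summing over $i \le k$ and swapping the two sums gives
$\sum_{i=1}^{k} d_t(i) = \sum_{a} \#\{i \le k : a \mid i\}$, where $a$ ranges over the integers with $a \le t$ that can actually divide some $i \le k$, i.e. over $a \le \min(t,k)$. The inner count is exactly $\floor{k/a}$, so the whole sum collapses to $\sum_{a=1}^{\min(t,k)} \floor{k/a}$.

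From there the estimate is routine. First I would replace $\floor{k/a}$ by $k/a$ at a cost of $O(1)$ per term; since there are at most $\min(t,k) \le k$ terms, this introduces an additive error of $O(k)$. What remains is $k \sum_{a=1}^{\min(t,k)} 1/a$, a harmonic sum, for which I would invoke $H_m = \sum_{a=1}^{m} 1/a = \log m + \gamma + O(1/m) = \log m + O(1)$. Multiplying through by $k$ converts the $O(1)$ into a further $O(k)$ error and yields $k \log \min(t,k) + O(k)$, as claimed.

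The only points requiring a little care---and the nearest thing to an obstacle---are bookkeeping ones. I would make explicit that a divisor of some $i \le k$ is automatically at most $k$, which is precisely what lets me cap the outer index at $\min(t,k)$ and thereby produce the $\min$ inside the logarithm; this is where the two regimes $t \le k$ and $t > k$ get unified into a single statement. I would also note that when $t$ is not an integer the true upper limit is $\floor{t}$, but since $\log \floor{\min(t,k)} = \log \min(t,k) + O(1)$ over the relevant range, this discrepancy is absorbed harmlessly into the $O(k)$ term. Notably, no deeper input is needed---neither the full strength of Dirichlet's theorem nor its sharp $O(\sqrt{k})$ error---because we only aim for an $O(k)$ remainder, which the crude bounds $\floor{x} = x + O(1)$ and $H_m = \log m + O(1)$ already supply.
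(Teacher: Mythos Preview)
Your proof is correct and rests on the same idea as the paper's: swap the order of summation to obtain $\sum_{a}\lfloor k/a\rfloor$ and then estimate via the harmonic number $H_m=\log m+O(1)$. The only difference is organizational---the paper treats $t>k$ by citing Dirichlet's theorem and $t\le k$ by the swap, whereas you handle both at once by capping the index at $\min(t,k)$; your version is slightly cleaner and entirely self-contained.
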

\begin{proof} For \math{t>k} the result follows from Dirichlet's theorem
as we may replace \math{d_t(i)} with \math{d(i)} in the summation.
For \math{t\le k},
\mand{
\sum_{i=1}^{k} d_t(i) = k + \left\lfloor \frac{k}{2} \right\rfloor + \left\lfloor \frac{k}{3} \right\rfloor + \dots + \left\lfloor \frac{k}{t} \right\rfloor\le k\cl H_t,
} 
where \math{\cl H_t} is the \math{t}-th harmonic number.
\end{proof}
\begin{proof}[Lower bounds:]
For exposition, assume that we construct our graph on 
countably many vertices and that we then restrict our attention
to the first $n$ vertices. 
Let \math{X_i} denote the index of the \math{i}-th reachable vertex (that is 
not \math{v_1}).  
If  $X_i > n$ then $|\mathcal{R}| \le i$.  Set
$X_{0} = 1$, and for \math{i\ge 1}, 
$X_{i} - X_{i-1}$ is geometrically
distributed with parameter $1 - (1-p)^{i}$.  
Fix $t$, and consider \math{\e[X_t]}:
\[
\e[X_t] = \sum_{k=1}^{t} \e[X_{k} - X_{k-1}] = \sum_{k=1}^{t} \frac{1}{1 - (1-p)^{k}}.  
\]
Each term is an infinite geometric series, and so
\[
\e[X_t] = \sum_{k=1}^{t} \sum_{j=0}^\infty (1-p)^{kj}.
\]
As this series is absolutely summable (as $\e[X_t]$ is clearly finite), 
Fubini's theorem allows us to rearrange terms in the summation to get
\[
\e[X_t] =t+
\sum_{k=1}^{t} \sum_{j=1}^\infty (1-p)^{kj}= 
t + \sum_{i=1}^{\infty} d_t(i) (1-p)^{i}.  
\]
because the term \math{(1-p)^{i}} appears in the original
summation (where \math{i=kj}) once for every divisor~\math{i} has that is 
at most \math{t}. 
We now use summation by parts to manipulate the second term:
\begin{align*}
\sum_{i=1}^{\infty} d_t(i) (1-p)^{i} &= p \sum_{i=1}^{\infty} (1-p)^{i-1} \left( \sum_{\ell=1}^i d_t(\ell) \right)\\ 
&= p \sum_{i=1}^{\infty} (1-p)^{i-1} (i \log(\min\{t,i\}) + O(i)) \\
  &\le p (\log t + O(1)) \sum_{i=1}^{\infty} i(1-p)^{i-1}.
\end{align*}
Since \math{\sum_{i=1}^{\infty} i(1-p)^{i-1}=1/p^2}, we have that 
\mld{
\e[X_t]= t + \frac{\log t}{p} + O\left(\frac{1}{p}\right).
\label{eq:Et}
}
Furthermore, since \math{X_{k+1}-X_{k}} and \math{X_{k}-X_{k-1}} are
independent,
\begin{align}
\Var(X_t) &= \sum_{k=1}^{t} \frac{p}{(1-(1-p)^{k})^2}\nonumber\\
 &\leq \sqrt{ \left( \sum_{k=1}^{t} \frac{p^2}{(1-(1-p)^k)^{3}} \right) \left( \sum_{k=1}^{t} \frac{1}{(1-(1-p)^k)} \right)} \nonumber \\
&\leq \sqrt{ \frac{t}{p} \e[X_t]}. \label{eqn1}
\end{align}
Here, the first 
inequality follows from an application of Cauchy-Schwarz, and the second 
from  
\math{
\frac{p^2}{(1 - (1-p)^{k})^3} \leq \frac{p^2}{p^3} = \frac{1}{p}. 
}

Now, suppose that $p = c\frac{\log n}{n}+ \xi(n)$ for $c < 1$, 
and set $t = n^{c}\exp(-n|\xi(n)|- \log\log(n))$.  Then, from \r{eq:Et},
\begin{align}
\e[X_t] &\leq n^{c}\exp(-n|\xi(n)|) + \frac{c \log n - 2n|\xi(n)|-\log \log n}{n^{-1}(c \log n + n\xi(n))} + O\left(\frac{\log n}{n}\right) \\
&\leq n^{c}\exp(-n|\xi(n)|) + n - \frac{n^2|\xi(n)|-\log \log n}{(c \log n + n\xi(n))} + O\left(\frac{\log n}{n}\right) \\
&= n\left(1 - \frac{n|\xi(n)|+\log \log(n)}{(c \log n + n \xi(n))}+ o\left({\frac{n|\xi(n)| + \log \log n}{\log n}}\right)\right).
\label{eq:Et1}
\end{align}
For $n$ sufficiently large, \math{\e[X_t]\le n\left(1-\frac{n|\xi(n)|+\log\log n}{2c\log n}\right)}.  
Meanwhile, from $(\ref{eqn1})$, 
\[
\Var(X_t)\le (1+o(1))\sqrt{\frac{n^c}{\log n}\cdot (1+o(1))\frac{n}{c\log n}\cdot\e[X_t]}
= \frac{n^{\frac12(1+c)}}{\log n}\sqrt{\frac{\e[X_t]}{c}}=O\left(\frac{n^{3/2}}{\log n}\right),
\]
because \math{\e[X_t]=O(n)} and \math{c<1}.
Chebyshev's inequality asserts that
\[
\p\left[|X_t - \e[X_t]|  \geq \frac{n^2|\xi(n)| + n\log \log n}{2c\log n} \right] \leq 
\frac{4c^2\log^2 n\cdot \Var(X_t)}{(n^2|\xi(n)|+n(\log\log n))^2} = o(1). 
\]
Thus, $\p\left[X_t \le \e[X_t]+\frac{n\log \log n}{2c \log n}\right] = 1-o(1)$.
Using \r{eq:Et1},
\mand{
\p\left[X_t \le n\left(1 - \frac{\log \log n}{2c\log n}+
o\left({\frac{\log \log n}{c\log n}}\right)\right)\right]=1-o(1),}
i.e., \math{X_t<n} a.a.s. Since \math{X_t<n} implies
\math{|\mathcal{R}| \ge t}, we have that
$|\mathcal{R}| > n^{c}\exp(-n|\xi(n)|-\log\log(n))=n^{c+o(1)}$ a.a.s.  

For $c > 1$, take $t= \frac{n \log\log n}{\log n}$. Then,
using \r{eq:Et}, 
\[
\e[X_t] \leq  \frac{n}{c} + o(n).  
\]
Again, by $(\ref{eqn1})$ and because \math{\e[X_t]=O(n)},
 $\Var(X_t) = O(n^{3/2}\sqrt{\log\log n}/\log n)=o(n^{3/2})$.  
Chebyschev's inequality asserts that
\[
\p\left[|X_t - \e[X_t]| \geq n^{3/4}\right] \le \frac{o(n^{3/2})}{n^{3/2}}
=o(1).
\]
Hence,
\begin{align}
\p\left[X_t \le\e[X_t] +n^{3/4}\right] = 1-o(1).  \label{eqn2}
\end{align}
So, \math{X_t\le \frac{n}{c}+o(n)} a.a.s.
We now consider the vertices indexed higher than \math{X_t} and show
that essentially all of them are reachable.
Let $Y$ be the vertices with index higher than $X_t$ 
which are \emph{not} adjacent to one of the first $t$ reachable 
vertices in \math{v_1,\ldots,v_{X_t}}.
Then
\begin{align*}
\e[|Y|] &= \sum_{j=X_t+1}^{n} (1-p)^{t}
=
(n-X_t)(1-p)^t
\le ne^{-pt}
=
\frac{n}{\log^{c+o(1)}n}=o(n).
\end{align*}
Applying Markov's inequality, $|Y| = o(n)$ with probability $1-o(1)$.  
Since the set of vertices indexed above \math{X_t} that is not reachable is 
a subset of 
\math{Y},  
$|\mathcal{R}| \geq t + (n - X_t)-|Y|$. Since 
\math{|Y|,t} are \math{o(n)} and \math{X_t=\frac{n}{c}+o(n)}, we have
that  $|\mathcal{R}| \ge n(1-\frac1c+o(1))$ with probability $1-o(1)$,
as desired.  
\end{proof}

\paragraph{Acknowledgement.}
Magdon-Ismail acknowledges that 
this research was sponsored by the Army Research Laboratory
and was accomplished under Cooperative Agreement
Number W911NF-09-2-0053. The views and conclusions
contained in this document are those of the authors and
should not be interpreted as representing the official policies,
either expressed or implied, of the Army Research Laboratory
or the U.S. Government. The U.S. Government is
authorized to reproduce and distribute reprints for Government
purposes notwithstanding any copyright notation here
on.

\bibliographystyle{abbrv}
\bibliography{main}

\begin{thebibliography}{1}

\bibitem{B}
B.~Bollob{\'a}s.
\newblock {\em Random graphs}, volume~73 of {\em Cambridge Studies in Advanced
  Mathematics}.
\newblock Cambridge University Press, Cambridge, second edition, 2001.

\bibitem{er}
P.~Erd{\H{o}}s and A.~R{\'e}nyi.
\newblock On the evolution of random graphs.
\newblock {\em Magyar Tud. Akad. Mat. Kutat\'o Int. K\"ozl.}, 5:17--61, 1960.

\bibitem{GP}
A.~J. Graham and D.~A. Pike.
\newblock A note on thresholds and connectivity in random directed graphs.
\newblock {\em Atl. Electron. J. Math.}, 3(1):1--5, 2008.

\bibitem{L}
T.~{\L}uczak.
\newblock The phase transition in the evolution of random digraphs.
\newblock {\em J. Graph Theory}, 14(2):217--223, 1990.

\bibitem{LS}
T.~{\L}uczak and T.~G. Seierstad.
\newblock The critical behavior of random digraphs.
\newblock {\em Random Structures Algorithms}, 35(3):271--293, 2009.

\bibitem{malik171}
M.~Magdon-Ismail and B.~Orecchio.
\newblock Guard your connections: Infiltration of a trust/reputation based
  network.
\newblock In {\em Proc. of 4th Int. Conference on Web Science (WebSci)}, pages
  291--300, 2012.

\bibitem{P}
I.~Pal{\'a}sti.
\newblock On the strong connectedness of directed random graphs.
\newblock {\em Studia Sci. Math. Hungar}, 1:205--214, 1966.

\end{thebibliography}

\end{document}